\newtheorem{theo}{Theorem}[section]
\newtheorem{prop}[theo]{Proposition}
\newtheorem{lemma}[theo]{Lemma}
\newcommand{\Vspace}{\vphantom{\vdots}}
\begin{document}
\title{Polynomial splittings of Ozsv{\'a}th and Szab{\'o}'s $\mathbf{d}$-invariant}
\author{Yuanyuan Bao}
\address{
Institute for Biology and Mathematics of Dynamical Cell Processes (iBMath), Interdisciplinary Center for Mathematical Sciences, Graduate School of Mathematical Sciences, University of Tokyo, 3-8-1 Komaba, Tokyo 153-8914, Japan
}

\email{bao@ms.u-tokyo.ac.jp}
\date{}
\begin{abstract}
For any rational homology 3-sphere and one of its spin$^{c}$-structures,  Ozsv{\'a}th and Szab{\'o} defined a topological invariant, called $d$-invariant. Given a knot in the 3-sphere, the $d$-invariants associated with the prime-power-fold branched covers of the knot, obstruct the smooth sliceness of the knot. These invariants bear some structural resemblances to Casson-Gordon invariants, which obstruct the topological sliceness of a knot. Se-Goo Kim found a polynomial splitting property for Casson-Gordon invariants. In this paper, we show a similar result for Ozsv{\'a}th and Szab{\'o}'s $d$-invariants. We give an application of the result.

\end{abstract}
\keywords{Alexander polynomial, knot concordance, Ozsv{\'a}th and Szab{\'o}'s $d$-invariant.}
\subjclass[2010]{Primary 57M27 57M25}
\maketitle

\section{Introduction}
We work in smooth category, and all manifolds are supposed to be smooth unless stated otherwise. An oriented knot $K$ in the 3-sphere $S^3$ is said to be \textit{slice} if there is a smoothly embedded 2-disk $\Delta$ in the 4-ball $B^4$ satisfying $\partial(B^4,\Delta) = (S^3,K)$. Here $\Delta$ is called a \textit{slice disk} of $K$. A pair of knots $K_1$ and $K_2$ are said to be \textit{smoothly concordant} (which is denoted by $K_{1}\sim K_{2}$) if $K_1\sharp (-K_2)$ is slice where $-K_{2}$ is the mirror image of $K_{2}$ with reversed orientation. Smooth concordance is an equivalence relation among knots and the set of equivalence classes becomes an abelian group under the operation of connected sum. The group is called the knot concordance group and denoted by $C$. Slice knots represent the zero element in $C$.

A knot $K$ is said to be \textit{ribbon} if $K$ bounds a smoothly immersed 2-disc in $S^3$ which has the property that the pre-image of each component of self-intersection consists of a properly embedded arc in the disc and an arc embedded in the interior of the disc.  It is obvious that each ribbon knot is smoothly slice.

To study the group structure of $C$, there are two basic questions to consider. First, given a knot, we need to figure out what the order of the knot is in $C$. Second, given several knots $K_{1}, K_{2}, \cdots K_{n}$, we want to know if they are linearly independent or not in $C$. Finite order elements in $C$ are called torsion elements. As for the first question, the only known torsion in $C$ is 2-torsion, which comes from negative amphicheiral knots. Some invariants such as signature, Rasmussen invariant and Ozsv{\'a}th and Szab{\'o}'s $\tau$-invariant, induce homomorphisms from $C$ to the group of integers. If a knot is a torsion element, it has vanishing value on such invariants.


For the independence problem in $C$, there is a systematic way to study it by considering the relative primeness of the Alexander polynomials. Levine \cite{MR0253348} first showed that if the connected sum of two knots with relatively prime Alexander polynomials has vanishing Levine obstructions, then so do both knots. Kim in \cite{MR2127228} showed that the Casson-Gordon-Gilmer obstruction splits in the same manner. In \cite{MR2425750}, a similar splitting property was proved for von Neumann $\rho$ invariants associated with certain metabelian representations.

In this paper, we study a similar splitting property for Ozsv{\'a}th and Szab{\'o}'s $d$-invariants, and apply this property to study the independence problem in $C$. 

Given a 3-manifold $Y$ and one of its torsion spin$^{c}$-structures $s$, the $d$-invariant $d(Y,s)$ is defined for $(Y,s)$ by Ozsv{\'a}th and Szab{\'o} in \cite{MR1957829}. Let $K$ be a knot in $S^3$, and $\Sigma^{n}(K)$ be the $n$-fold cyclic branched cover of $S^3$ along $K$ with $n=q^r$ for some prime number $q$. Then $\Sigma^{n}(K)$ is a rational homology 3-sphere. Therefore we can consider the $d$-invariant of $\Sigma^{n}(K)$ for any of its spin$^{c}$-structures.

If $K$ is a slice knot, let $W^{n}(\Delta)$ be the $n$-fold branched cover of $B^4$ along a slice disk $\Delta$ of $K$. It is known that $W^{n}(\Delta)$ is a rational homology 4-ball whose boundary is $\Sigma^{n}(K)$. As studied in \cite{MR1957829} and reformulated in many papers such as \cite{MR2443966, MR2326940}, many of the $d$-invariants for $\Sigma^{n}(K)$ must vanish (see Theorem~\ref{os}).												

For any spin$^c$-structure $s$ over $\Sigma^{n}(K)$, define $$\bar{d}(\Sigma^{n}(K), s)=d(\Sigma^{n}(K), s)-d(\Sigma^{n}(K), s_{0}),$$ where $s_{0}$ is a special spin$^c$-structure being discussed in Section 2. The first homology group $H_{1}(\Sigma^{n}(K); \mathbb{Z})$ acts freely and transitively on the set of spin$^{c}$-structures $\mathrm{Spin}^c(\Sigma^{n}(K))$. Given an element $s\in \mathrm{Spin}^c(\Sigma^{n}(K))$ and an element $a\in H_{1}(\Sigma^{n}(K); \mathbb{Z})$, let $s+a$ denote the resulting element under the group action. We prove the following theorem.
\begin{theo}
\label{theo1}
Let $K_{1}$ and $K_{2}$ be two knots whose Alexander polynomials are relatively prime in $\mathbb{Q}[t, t^{-1}]$. Suppose further that at least one of $K_{1}$ and $K_{2}$ has non-singular Seifert form. Then the following hold.
\begin{enumerate}
\item If $K_{1}\sharp K_{2}$ is slice, then for all but finitely many prime numbers $q$ there exists a subgroup $M_{i}< H_{1}(\Sigma^{n}(K_{i}); \mathbb{Z})$ satisfying $|M_{i}|^{2}=|H_{1}(\Sigma^{n}(K_{i}))|$ such that $\bar{d}(\Sigma^{n}(K_{i}), s_{0}+m_{i})=0$ for any $m_{i}\in M_{i}$ and $i=1,2$, where $n$ can be any power of $q$.
\item If $K_{1}\sharp K_{2}$ is ribbon, the conclusion holds for any prime number $q$.
\end{enumerate}
\end{theo}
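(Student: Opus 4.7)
My plan is to apply Theorem~\ref{os} to the slice (or ribbon) knot $K_1\sharp K_2$ and its $n$-fold prime-power branched cover, then transport the resulting metabolizer subgroup across the connected-sum decomposition $\Sigma^n(K_1\sharp K_2)=\Sigma^n(K_1)\sharp\Sigma^n(K_2)$, which induces $H_1(\Sigma^n(K_1\sharp K_2);\mathbb{Z})\cong H_1(\Sigma^n(K_1);\mathbb{Z})\oplus H_1(\Sigma^n(K_2);\mathbb{Z})$. Concretely, a slice disk $\Delta$ for $K_1\sharp K_2$ lifts to a rational homology $4$-ball $W^n(\Delta)$, and Theorem~\ref{os} produces a subgroup $M\le H_1(\Sigma^n(K_1\sharp K_2);\mathbb{Z})$ of square-root order on which $\bar{d}(\Sigma^n(K_1\sharp K_2),s_0+m)=0$ for every $m\in M$.

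The heart of the argument is an algebraic lemma: for all but finitely many primes $q$, and for every $n=q^r$, the orders $|H_1(\Sigma^n(K_i);\mathbb{Z})|$ are coprime for $i=1,2$. I would deduce this from Fox's formula
\[
|H_1(\Sigma^n(K);\mathbb{Z})|\;=\;\prod_{j=1}^{n-1}|\Delta_K(\zeta_n^{j})|,
\]
together with the coprimality of $\Delta_{K_1}$ and $\Delta_{K_2}$ in $\mathbb{Q}[t,t^{-1}]$: the latter forces a nonzero resultant whose prime divisors constitute the finite exceptional set. The non-singular Seifert form hypothesis on one of $K_i$ should enter here to rule out the degenerate case where some prime-power root of unity is a root of $\Delta_{K_i}$ and to keep the relevant orders honest positive integers so the resultant argument is clean. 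Once coprimality of orders is known, an elementary group-theoretic argument yields $M=M_1\oplus M_2$ with $M_i:=M\cap H_1(\Sigma^n(K_i);\mathbb{Z})$, and coprimality pins down $|M_i|^2=|H_1(\Sigma^n(K_i);\mathbb{Z})|$.

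With this splitting in hand, I would take $s_0$ on $\Sigma^n(K_1\sharp K_2)$ to be $s_0^{(1)}\sharp s_0^{(2)}$; then the additivity of the $d$-invariant under connected sum gives
\[
\bar{d}(\Sigma^n(K_1\sharp K_2),s_0+(m_1,m_2))=\bar{d}(\Sigma^n(K_1),s_0+m_1)+\bar{d}(\Sigma^n(K_2),s_0+m_2).
\]
Applying this at $(m_1,0)\in M$ with $m_1\in M_1$, and using $\bar{d}(\Sigma^n(K_2),s_0)=0$ by the definition of $\bar{d}$, yields $\bar{d}(\Sigma^n(K_1),s_0+m_1)=0$ for every $m_1\in M_1$, and symmetrically for $K_2$; this proves~(i). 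For~(ii), a ribbon disk for $K_1\sharp K_2$ produces a handle decomposition of $W^n(\Delta)$ built only from $0$-, $1$-, and $2$-handles (no $3$-handles), and the extra rigidity this provides should remove the ``finitely many primes'' caveat in the algebraic step.

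The main obstacle is precisely the algebraic step: identifying the finite exceptional set of primes and pinpointing how the non-singular Seifert form hypothesis on one of $K_1,K_2$ forces coprimality of the orders $|H_1(\Sigma^n(K_i);\mathbb{Z})|$. Once that lemma is established, the remainder of the proof is a formal combination of the Ozsv{\'a}th--Szab{\'o} metabolizer theorem, the direct-sum decomposition of $M$, and additivity of the $d$-invariant under connected sum.
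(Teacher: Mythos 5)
Your overall skeleton (apply Theorem~\ref{os} to $K_1\sharp K_2$, split the square-root-order subgroup $M=\mathrm{Ker}(\zeta)$ along $H_1(\Sigma^n(K_1))\oplus H_1(\Sigma^n(K_2))$, finish with additivity of $d$) is the right shape, but the step you yourself flag as the heart of the argument is false. You claim that for all but finitely many $q$ the orders $|H_1(\Sigma^n(K_1);\mathbb{Z})|$ and $|H_1(\Sigma^n(K_2);\mathbb{Z})|$ are coprime, the exceptional primes being the divisors of $\mathrm{Res}(\Delta_{K_1},\Delta_{K_2})$. But a prime $p$ divides $|H_1(\Sigma^n(K_i))|$ exactly when $\Delta_{K_i}$ shares a root with $(t^n-1)/(t-1)$ in the algebraic closure of $\mathbb{F}_p$, and the two Alexander polynomials can meet $t^n-1$ at \emph{different} roots of unity; then $p$ divides both orders without dividing $\mathrm{Res}(\Delta_{K_1},\Delta_{K_2})$. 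Concretely, take $K_1=T_1$ and $K_2=T_9$, with relatively prime Alexander polynomials $-t^2+3t-1$ and $-9t^2+19t-9$: modulo $11$ the first has roots $5,9$ and the second has roots $3,4$, and all four are primitive fifth roots of unity in $\mathbb{F}_{11}$. Hence $11$ divides both $|H_1(\Sigma^{5}(T_1))|=121$ and $|H_1(\Sigma^{5}(T_9))|=11^2\cdot 41^2$, while $11\nmid\mathrm{Res}(\Delta_{T_1},\Delta_{T_9})$. Worse, since the auxiliary prime $p$ is allowed to vary with $q$, there is no apparent reason the set of $q$ for which such a coincidence occurs is finite at all, so this route does not merely need a larger exceptional set --- it has no finite exceptional set in sight. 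Your guess about where the non-singular Seifert form hypothesis enters is likewise not where it is actually needed.

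The paper obtains both the splitting of $M$ and the finite exceptional set by a different mechanism. The splitting happens at the Seifert-form level: the kernel $Z$ of $H_1(F;\mathbb{Z})\to H_1(R;\mathbb{Z})/\mathrm{Tor}$, where $R$ is a $3$-manifold in $B^4$ bounded by $F\cup\Delta$, is a metabolizer of $\theta_1\oplus\theta_2$, and Lemma~\ref{kim} (Kervaire--Levine--Kim) --- this is precisely where relative primeness of the Alexander polynomials and non-singularity of one Seifert form are used --- splits it as $Z_1\oplus Z_2$ with $Z_i$ a metabolizer of $\theta_i$; pushing forward into $H_1(\Sigma^n(K))$ and applying Lemma~\ref{lemma4} produces $M_i$ of the correct order. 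The finite set of bad primes $q$ is $\bigcup_j S_{p_j,K}$ from Lemma~\ref{lemma2}, where the $p_j$ are the primes dividing $|\mathrm{Tor}(H_1(R;\mathbb{Z}))|$; for $q$ outside this set one shows $M\subset\mathrm{Ker}(\zeta)$ and then $M=\mathrm{Ker}(\zeta)$ by comparing orders. For part (ii) the correct statement is that a ribbon knot admits a slice disk for which $R$ is a handlebody, so $H_1(R;\mathbb{Z})$ is torsion-free and the exceptional set is empty; this is about $R$, not about the handle decomposition of $W^n(\Delta)$. To repair your argument you would have to replace the resultant/coprimality lemma by this Seifert-form mechanism or something equivalent.
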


For the $\tau$-invariant defined in \cite{MR2443966}, analogous properties can be proved by using the same argument. Furthermore, for invariants $\mathcal{T}_{p}^{n}(K)$ and $\mathcal{D}_{p}^{n}(K)$ which are defined similarly as $\mathcal{T}_{p}(K)$ and $\mathcal{D}_{p}(K)$ in \cite{MR2443966}, we have Theorem~\ref{2}.		
	
As an application of the results above, we show the following property, which has been know before \cite{MR2127228}.
\begin{prop}
\label{p}
Let $T_{k}$ be the $k$-twist knot. Excluding the unknot, $T_{1}$ (which is the figure-8 knot) and $T_{2}$ (which is Stevedore's knot), no non-trivial linear combinations of twist knots are ribbon knots.

\end{prop}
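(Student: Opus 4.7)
My plan is to argue by contradiction via Theorem~\ref{theo1}(ii), peeling off one twist knot from any putative non-trivial ribbon combination and obstructing it by an explicit computation of $d$-invariants of lens spaces.

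First I would verify that the family $\{T_{k}\}_{k\geq 3}$ is tailored to Theorem~\ref{theo1}. The Alexander polynomial of $T_{k}$ is $\Delta_{T_{k}}(t)=kt-(2k+1)+kt^{-1}$. A one-line calculation gives $\ell\,\Delta_{T_{k}}-k\,\Delta_{T_{\ell}}=k-\ell$, so $\gcd(\Delta_{T_{k}},\Delta_{T_{\ell}})=1$ in $\mathbb{Q}[t,t^{-1}]$ whenever $k\neq\ell$. Moreover $T_{k}$ bounds a genus-one Seifert surface whose Seifert matrix has determinant $\pm k\neq 0$, so its Seifert form is non-singular.

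Now suppose, for contradiction, that $K=\sharp_{i=1}^{m}c_{i}T_{k_{i}}$ is ribbon with pairwise distinct $k_{i}\geq 3$ and non-zero $c_{i}\in\mathbb{Z}$, where a negative $c_{i}$ is interpreted as $|c_{i}|$ summands of $-T_{k_{i}}$. Fix any index, say $i=1$, and split $K=K_{1}\sharp K_{2}$ with $K_{1}=c_{1}T_{k_{1}}$ and $K_{2}=\sharp_{i\geq 2}c_{i}T_{k_{i}}$. By the previous paragraph $\Delta_{K_{1}}$ and $\Delta_{K_{2}}$ are coprime in $\mathbb{Q}[t,t^{-1}]$, and $K_{1}$ has non-singular Seifert form as a connected sum of knots with that property. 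Theorem~\ref{theo1}(ii) with $q=2$ and $n=2$ therefore provides a subgroup $M_{1}<H_{1}(\Sigma^{2}(K_{1});\mathbb{Z})$ with $|M_{1}|^{2}=|H_{1}(\Sigma^{2}(K_{1}))|=(4k_{1}+1)^{|c_{1}|}$ on which $\bar{d}(\Sigma^{2}(K_{1}),s_{0}+m)$ vanishes.

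The main obstacle, and the heart of the argument, is to contradict the existence of this $M_{1}$. Since $T_{k}$ is a two-bridge knot, $\Sigma^{2}(T_{k})$ is a lens space of order $4k+1$, so $\Sigma^{2}(K_{1})$ is a connected sum of $|c_{1}|$ copies of this lens space (with orientation dictated by $\operatorname{sign}(c_{1})$), and $\bar{d}$ on $\Sigma^{2}(K_{1})$ is a sum of the corresponding shifted lens-space $d$-invariants. I would feed these into the Ozsv\'ath--Szab\'o recursive formula for lens-space $d$-invariants and show that the zero set of $\bar{d}$ is too sparse to contain a subgroup of $H_{1}(\Sigma^{2}(K_{1}))$ of order $(4k_{1}+1)^{|c_{1}|/2}$: when $(4k_{1}+1)^{|c_{1}|}$ is not a perfect square no such subgroup exists at all, and when it is, an explicit arithmetic check of the $\bar{d}$-values on the resulting connected sum rules out a vanishing subgroup of the required size as long as $k_{1}\geq 3$. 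This is the step that will require the most care, since it is exactly where the exclusion of the small indices $k=0,1,2$ must enter. The resulting contradiction forces every $c_{i}=0$, proving the proposition.
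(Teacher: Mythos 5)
Your setup (coprimality of the Alexander polynomials via $\ell\,\Delta_{T_{k}}-k\,\Delta_{T_{\ell}}=k-\ell$, non-singularity of the genus-one Seifert forms, and the reduction to obstructing a single summand) is sound, but the step you yourself flag as ``the heart of the argument'' is a genuine gap, and it is exactly the point where the paper takes a different and essential detour. Applying Theorem~\ref{theo1}(ii) to $K_{1}=c_{1}T_{k_{1}}$ only gives you a metabolizing subgroup $M_{1}$ of $H_{1}(\Sigma^{2}(c_{1}T_{k_{1}}))\cong(\mathbb{Z}/(4k_{1}+1))^{|c_{1}|}$, and you must then show that \emph{no} subgroup of order $(4k_{1}+1)^{|c_{1}|/2}$ has vanishing $\bar d$. ``Sparseness of the zero set'' is not the right mechanism: the subgroups of $(\mathbb{Z}/N)^{|c_{1}|}$ of that order include all twisted diagonals $\{(a,\lambda a,\dots)\}$, on which the condition becomes a family of identities such as $\bar d(L_{k},a)+\bar d(L_{k},\lambda a)=0$ for all $a$, and ruling these out for every $\lambda$, every $|c_{1}|$, and every $k_{1}\geq 3$ is precisely the content you have not supplied. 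The paper avoids this entirely by routing the argument through Theorem~\ref{2}: the invariants $\overline{\mathcal{D}}_{p}^{\,n}$ are built so that their vanishing descends from $n_{1}K_{1}\sharp n_{2}K_{2}$ to the \emph{prime summand} $T_{k_{i}}$ itself (not to $c_{i}T_{k_{i}}$), because any metabolizer of a connected sum contains an order-$p$ subgroup projecting nontrivially to a single factor. That reduces everything to one lens space $L(4k+1,2)$, where Proposition~5.1 of \cite{MR2443966} gives that the order-$p$ subgroup sums $S_{H}(d)$ are nonzero (indeed of a single sign) for $k\geq 3$. Without this sign-definiteness of subgroup sums, or an equivalent substitute, your contradiction does not close.

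Two smaller points. First, you restrict to $k_{i}\geq 3$ from the outset, but the proposition only excludes $T_{0},T_{1},T_{2}$; the negatively twisted knots must also be eliminated, which the paper does by Levine's computation that $T_{k}$ has infinite order in $C_{alg}$ for $k<0$ (combined with Lemma~\ref{kim}). Second, the case in which $(4k_{1}+1)^{|c_{1}|}$ fails to be a perfect square does yield an immediate contradiction with Theorem~\ref{theo1}, but it is also exactly the case already covered by algebraic concordance, so the substance of the proposition lies entirely in the case you defer.
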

\subsection*{Acknowledgements}
I would like to thank Charles Livingston and Se-Goo Kim for helpful discussion. My special thanks are due to Livingston for proposing to me the question concerning Lemma~\ref{lemma2}, and for telling me a concise proof that one can choose a slice disk for a ribbon knot which bounds a handlebody in the 4-ball.

The author was supported by Grant-in-Aid for JSPS Fellows, and by Platform for Dynamic Approaches to Living System from the Ministry of Education, Culture, Sports, Science and
Technology, Japan.

\section{Proof of Theorem \ref{theo1}}
\subsection{Alexander polynomial, Seifert form and $d$-invariant}
In this subsection, we review some backgrounds needed in the proof of Theorem \ref{theo1}. Let $K$ be a knot in $S^{3}$ with a Seifert surface $F$ of genus $g$. Define the Seifert form $\theta: H_{1}(F; \mathbb{Z})\times H_{1}(F; \mathbb{Z}) \to \mathbb{Z}$ as $\theta (x, y):=lk(i_{+}(x), y)$ for any simple closed curves $x, y$ representing elements in $H_{1}(F; \mathbb{Z})$, where $i_{+}$ denotes the map that pushes a class off in the positive normal direction. Fix a basis $\lbrace a_{1}, a_{2}, \cdots, a_{2g}\rbrace$ for $H_{1}(F; \mathbb{Z})$ and  let $A$ be the Seifert matrix associated with this basis. 

A Seifert form $\theta$ on $H_{1}(F; \mathbb{Z})$ is said to be \textit{null-concordant} if there exists a direct summand $Z$ of $H_{1}(F; \mathbb{Z})$ so that $\mathrm{rank}(Z)=\dfrac{1}{2}\mathrm{rank}(H_{1}(F; \mathbb{Z}))$ and $\theta (Z, Z)=0$. Such a direct summand $Z$ is called a \textit{metabolizer} of $\theta$.

A knot which has a null-concordant Seifert form is called \textit{algebraically slice}. Two knots $K_{1}$ and $K_{2}$ are said to be algebraically concordant if $K_{1}\sharp (-K_{2})$ is algebraically slice. The set of the equivalence classes of knots under this relation becomes a group as well, called \textit{algebraic concordance group}, and we denote it $C_{alg}$.

The following lemma is Lemma 3.1 in \cite{MR2127228}, which was refined from \cite[Proposition 3]{MR0283786}. Note that the definition of null-concordance of a Seifert form in \cite{MR2127228} is different from the one we are using, but the two definitions are equivalent.

\begin{lemma}[Kervaire, Levine, Kim]
\label{kim}
Given two knots $K_{1}$ and $K_{2}$, let $F_{i}$ be a Seifert surface of $K_{i}$, and $\theta_{i}$ be the Seifert form on $H_{1}(F_{i}; \mathbb{Z})$ for $i=1,2$. Suppose the Alexander polynomials of $K_{1}$ and $K_{2}$ are relatively prime in $\mathbb{Q}[t^{-1}, t]$, and either $\theta_{1}$ or $\theta_{2}$ is non-singular.
Then if $\theta_{1}\oplus\theta_{2}$ is null-concordant with a metabolizer $Z$, then $\theta_{i}$ is null-concordant with metabolizer $Z_{i}=Z\cap H_{1}(F_{i}; \mathbb{Z})$ for $i=1, 2$.
\end{lemma}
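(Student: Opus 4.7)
The plan is to prove the lemma in two stages: first a rational statement about $V:=(H_1(F_1;\mathbb{Z})\oplus H_1(F_2;\mathbb{Z}))\otimes\mathbb{Q}$, then a descent to $\mathbb{Z}$ using that $Z$ is a direct summand of $H_1(F_1;\mathbb{Z})\oplus H_1(F_2;\mathbb{Z})$.

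Write $V_i:=H_1(F_i;\mathbb{Z})\otimes\mathbb{Q}$, $\widetilde Z:=Z\otimes\mathbb{Q}$, and $W_i:=\widetilde Z\cap V_i$, so that $\dim_{\mathbb{Q}}\widetilde Z=g_1+g_2$, where $2g_i=\mathrm{rank}\,H_1(F_i;\mathbb{Z})$. Projecting $\widetilde Z$ onto $V_i$ via the canonical projection $\pi_i\colon V\to V_i$ yields a short exact sequence $0\to W_{3-i}\to\widetilde Z\to\pi_i(\widetilde Z)\to 0$, hence $\dim W_1+\dim W_2\le g_1+g_2$, with equality iff $\widetilde Z=W_1\oplus W_2$. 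On the other hand, $\theta_i(W_i,W_i)=0$ forces the skew part $A_i-A_i^T$, which is the intersection form on $F_i$ and is non-degenerate symplectic on $V_i$, to vanish on $W_i$; hence $\dim W_i\le g_i$. Establishing the direct-sum splitting $\widetilde Z=W_1\oplus W_2$ therefore pins $\dim W_i$ at exactly $g_i$ and makes $W_i$ a rational metabolizer for $\theta_i$.

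The main (and hardest) step is this splitting, which is where coprimeness of the Alexander polynomials enters. I would pass to the rational Alexander modules $\Lambda_i$ of $K_i$, each a torsion $\mathbb{Q}[t,t^{-1}]$-module annihilated by $\Delta_i$. Coprimeness of $\Delta_1$ and $\Delta_2$ in $\mathbb{Q}[t,t^{-1}]$ together with the Chinese remainder theorem force any $\mathbb{Q}[t,t^{-1}]$-submodule of $\Lambda_1\oplus\Lambda_2$ to split as a direct sum of submodules of $\Lambda_1$ and $\Lambda_2$ separately. The hypothesis that at least one Seifert form, say $\theta_1$, is non-singular gives that $A_1$ is invertible over $\mathbb{Q}$; this permits an identification of $V$ (carrying $\theta_1\oplus\theta_2$) with a Lagrangian setting on $\Lambda_1\oplus\Lambda_2$ (carrying the Blanchfield pairing) under which $\widetilde Z$ corresponds to a Lagrangian submodule. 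Applying the CRT splitting to that Lagrangian and pulling it back to $V$ yields $\widetilde Z=W_1\oplus W_2$.

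The chief obstacle is this transport from the Seifert-form picture to the Blanchfield picture under only the one-sided non-singularity assumption; this is precisely the refinement Kim performs in \cite{MR2127228} over the earlier Kervaire--Levine version, and I would follow that template. Once $\widetilde Z=W_1\oplus W_2$ is in hand the integral conclusion is immediate: $Z_i:=Z\cap H_1(F_i;\mathbb{Z})$ is pure in $H_1(F_i;\mathbb{Z})$ because $Z$ is a direct summand of $H_1(F_1;\mathbb{Z})\oplus H_1(F_2;\mathbb{Z})$, hence $Z_i$ is itself a direct summand; it has $\mathbb{Q}$-rank $g_i$ by the previous paragraph, and $\theta_i(Z_i,Z_i)=0$ by construction, so $Z_i$ is a metabolizer for $\theta_i$ as claimed.
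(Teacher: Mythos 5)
The paper does not actually prove this lemma; it is quoted as Lemma~3.1 of Kim \cite{MR2127228} (a refinement of Kervaire--Levine), so there is no in-paper argument to compare against --- only the cited source. Your surrounding architecture is sound: the dimension count from the exact sequence $0\to W_{3-i}\to\widetilde Z\to\pi_i(\widetilde Z)\to 0$, the bound $\dim W_i\le g_i$ from total isotropy with respect to the non-degenerate intersection form, and the integral descent (purity of $Z\cap H_1(F_i;\mathbb{Z})$ in a free group, and $Z_i\otimes\mathbb{Q}=W_i$) are all correct.

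The gap is that the one step carrying all the content --- the splitting $\widetilde Z=W_1\oplus W_2$ --- is not proved; you reduce it to ``follow Kim's template'' and explicitly flag the transport to the Blanchfield picture under one-sided non-singularity as an unresolved obstacle. Two things are missing. First, the mechanism that makes $\widetilde Z$ a module in the first place: one should check that a metabolizer of $\theta$ is automatically invariant under $G=(A^{t}-A)^{-1}A^{t}$ (because $\theta(Z,Z)=0$ forces $A^{t}Z$ into the annihilator of $Z$, which equals $(A^{t}-A)Z$ since $Z$ is Lagrangian for the intersection form); after that the splitting is a statement about $G$-invariant subspaces of $V_1\oplus V_2$, and no detour through Blanchfield pairings is needed. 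Second, and more seriously, coprimality of the Alexander polynomials does \emph{not} by itself yield coprimality of the characteristic polynomials of $G_1$ and $G_2$: when $A_i$ is singular, $\det(xI-G_i)$ acquires factors of $x$ and $x-1$ that are invisible in $\Delta_{K_i}(t)$, so if both Seifert matrices were singular the two characteristic polynomials could share such a factor and the Chinese-remainder splitting would fail. This is exactly what the hypothesis that at least one $\theta_i$ is non-singular is for, and your sketch never engages with it; as written, your argument would go through verbatim without that hypothesis, which signals that the essential case analysis (the generalized eigenspaces of $G$ at the eigenvalues $0$ and $1$) has been skipped rather than handled.
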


From this lemma we see that,  with the assumption in this lemma, if $K_{1}\sharp K_{2}$ is algebraically slice, so are both $K_{1}$ and $K_{2}$.


Let $\Sigma^{n}(K)$ be defined as before for a given knot $K$ and a prime power $n$.
The homology group $H_{1}(\Sigma^{n}(K); \mathbb{Z})$ acts freely and transitively on the set $\mathrm{Spin}^{c}(\Sigma^{n}(K))$. So a choice of one spin$^{c}$-structure gives a bijection between $\mathrm{Spin}^{c}(\Sigma^{n}(K))$ and $H_{1}(\Sigma^{n}(K); \mathbb{Z})$. As discussed in Section 2 of \cite{MR2443966}, there exists a canonical spin$^{c}$-structure $s_{0}\in \mathrm{Spin}^{c}(\Sigma^{n}(K))$,  which is uniquely characterised by $K$ and $n$. For more details about the definition, please refer to \cite{MR2443966} and \cite{jabuka}. If $H_{1}({\Sigma^{n}(K)}; \mathbb{Z})$ has no 2-torsion, then $s_{0}$ is the unique spin-structure over $\Sigma^{n}(K)$. Under this $s_{0}$, we can identify $H_{1}(\Sigma^{n}(K))$ and $\mathrm{spin}^{c}(\Sigma^{n}(K))$, by sending $m\in H_{1}(\Sigma^{n}(K); \mathbb{Z})$ to $s_{0}+m \in \mathrm{Spin}^{c}(\Sigma^{n}(K))$. One nice property of $s_{0}$ is that it is compatible with the connected sum of knots. Namely, $$s_{0}(K_{1}\sharp K_{2})=s_{0}(K_{1})\sharp s_{0}(K_{2})$$ for two given knots $K_{1}$ and $K_{2}$, where $s_{0}(K)$ denotes the spin$^{c}$-structure $s_{0}$ of $\Sigma^{n}(K)$.

If the knot $K$ is slice, as before let $W^{n}(\Delta)$ be the $n$-fold branched cover of $B^{4}$ along a slice disk $\Delta$ of $K$. Consider the homomorphism $\zeta: H_{1}(\Sigma^{n}(K); \mathbb{Z})\rightarrow H_{1}(W^{n}(\Delta); \mathbb{Z})$ induced by the inclusion map. Then it is known that a spin$^{c}$-structure over $\Sigma^{n}(K)$ can be extended to $W^{n}(\Delta)$ if and only if $s$ has the form $s=s_{0}+m$ for some $m\in \mathrm{Ker}(\zeta)$. As an application of the results in \cite{MR1957829}, the following theorem is known (see also \cite{MR2443966}).

\begin{theo}[Ozsv{\'a}th and Szab{\'o}]
\label{os}
If $K$ is slice, then $d(\Sigma^{n}(K), s_{0}+m)=0$ for any $m\in \mathrm{Ker}(\zeta)$.\end{theo}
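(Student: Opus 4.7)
The plan is to apply Ozsv{\'a}th-Szab{\'o}'s fundamental $d$-invariant inequality twice, once to $W:=W^{n}(\Delta)$ and once to its orientation reversal $-W$. Since $W$ is a rational homology $4$-ball, both $W$ and $-W$ satisfy the hypothesis $b_{2}^{+}=0$ vacuously. Recall the inequality in question: for a $4$-manifold $X$ with $b_{1}(X)=0$ and $b_{2}^{+}(X)=0$ bounded by a rational homology sphere $Y$, and for any spin$^{c}$-structure $\mathfrak{s}$ on $X$ restricting to $s$ on $Y$, one has
\[
c_{1}(\mathfrak{s})^{2} + b_{2}(X) \;\leq\; 4\,d(Y,s).
\]

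First I would use the extension statement recalled immediately before the theorem: for any prescribed $m\in\mathrm{Ker}(\zeta)$, the spin$^{c}$-structure $s_{0}+m$ lifts to some $\mathfrak{s}\in\mathrm{Spin}^{c}(W)$. Because $W$ is a rational homology ball, $b_{2}(W)=0$ and the rational intersection form on $H^{2}(W;\mathbb{Q})=0$ is trivial, so $c_{1}(\mathfrak{s})^{2}=0$. The displayed inequality then reduces to $d(\Sigma^{n}(K), s_{0}+m)\geq 0$.

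For the reverse inequality, the same argument is run with $-W$ in place of $W$. Orientation reversal sends $W$ to another rational homology ball, now bounded by $-\Sigma^{n}(K)$, and $\mathfrak{s}$ restricts there to some spin$^{c}$-structure $s'$; again $b_{2}(-W)=0$ and $c_{1}^{2}=0$, so the inequality forces $d(-\Sigma^{n}(K), s')\geq 0$. Invoking the orientation-reversal identity $d(-Y, s')=-d(Y,s)$, once $s'$ is matched with $s_{0}+m$ under the canonical identification between $\mathrm{Spin}^{c}(-\Sigma^{n}(K))$ and $\mathrm{Spin}^{c}(\Sigma^{n}(K))$, rewrites the conclusion as $d(\Sigma^{n}(K), s_{0}+m)\leq 0$. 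Combining the two inequalities gives the theorem.

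The only step that requires genuine care is the bookkeeping of how orientation reversal acts on spin$^{c}$-structures and how this interacts with conjugation-invariance of the $d$-invariant; this is routine but easy to mis-state. Otherwise the argument is essentially the packaging of the inequalities of \cite{MR1957829} that is already used systematically in \cite{MR2443966}, and I do not anticipate any substantive obstacle.
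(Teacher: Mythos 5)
Your argument is correct and is essentially the standard derivation of this result: the paper itself offers no proof, quoting the statement from \cite{MR1957829} (see also \cite{MR2443966}), and your double application of the inequality $c_{1}(\mathfrak{s})^{2}+b_{2}(X)\le 4\,d(Y,s)$ to the rational homology ball $W^{n}(\Delta)$ and to its orientation reversal is exactly how those sources obtain it. The one point you flag as delicate is handled by the canonical identification $\mathrm{Spin}^{c}(-Y)=\mathrm{Spin}^{c}(Y)$ together with the property $d(-Y,s)=-d(Y,s)$, and since $H^{2}(W^{n}(\Delta);\mathbb{Q})=0$ forces $c_{1}(\mathfrak{s})^{2}=0$ and $b_{2}=0$ on both $W$ and $-W$, the two inequalities do combine to give $d(\Sigma^{n}(K),s_{0}+m)=0$ for every $m\in\mathrm{Ker}(\zeta)$.
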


\subsection{Proof of Theorem \ref{theo1}}
The order of $H_{1}(\Sigma^{n}(K); \mathbb{Z})$ is determined by the Alexander polynomial $\Delta_{K}(t)$ of $K$. Precisely we have Fox's formula $$\left| H_{1}(\Sigma^{n}(K); \mathbb{Z})\right|=\left|\prod_{j=0}^{n}\Delta_{K}(\mathrm{exp}({2\pi j}/{n}))\right|.$$ We prove the following lemma.

\begin{lemma}
\label{lemma2}
Given a knot $K$ and a prime number $p\geq 2$, let $S_{p, K}$ be the set of prime numbers with the property that if $q\in S_{p, K}$ then there is certain integer $r$ such that $p$ divides the order of $H_{1}(\Sigma^{q^r}(K); \mathbb{Z})$. Then $S_{p,K}$ is a finite set.
\end{lemma}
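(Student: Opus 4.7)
The plan is to use Fox's formula to decompose $|H_{1}(\Sigma^{q^{r}}(K); \mathbb{Z})|$ as a product of resultants with cyclotomic polynomials, and then argue that for a fixed prime $p$ only finitely many primes $q$ can produce a factor divisible by $p$. For $n = q^{r}$, every $q^{r}$-th root of unity other than $1$ is a primitive $q^{k}$-th root for a unique $k$ with $1 \le k \le r$. Grouping the terms in Fox's formula accordingly, and using that the $q^{k}$-th cyclotomic polynomial $\Phi_{q^{k}}$ is monic, yields
$$|H_{1}(\Sigma^{q^{r}}(K); \mathbb{Z})| = \prod_{k=1}^{r} \left| \mathrm{Res}(\Delta_{K}, \Phi_{q^{k}}) \right|.$$
Hence $p \mid |H_{1}(\Sigma^{q^{r}}(K); \mathbb{Z})|$ for some $r$ if and only if $p \mid \mathrm{Res}(\Delta_{K}, \Phi_{q^{k}})$ for some $k \ge 1$, equivalently, $\Delta_{K}$ and $\Phi_{q^{k}}$ share a common factor in $\mathbb{F}_{p}[t]$.

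Next I would split on whether $q = p$ or $q \neq p$. When $q = p$, the Frobenius endomorphism gives $\Phi_{p^{k}}(t) \equiv (t-1)^{\phi(p^{k})} \pmod{p}$, so a common factor would force $p \mid \Delta_{K}(1) = \pm 1$, which is impossible; thus $p \notin S_{p,K}$. When $q \neq p$, the polynomial $\Phi_{q^{k}}$ is separable over $\mathbb{F}_{p}$ and its roots in $\overline{\mathbb{F}_{p}}$ are exactly the elements of multiplicative order $q^{k}$. So the condition $p \mid \mathrm{Res}(\Delta_{K}, \Phi_{q^{k}})$ becomes the requirement that $\Delta_{K} \pmod{p}$ have a root in $\overline{\mathbb{F}_{p}}^{\times}$ whose multiplicative order is exactly $q^{k}$.

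Finally, since $\Delta_{K}(1) = \pm 1$ the reduction $\Delta_{K} \pmod{p}$ is a nonzero polynomial, hence has at most $\deg \Delta_{K}$ roots in $\overline{\mathbb{F}_{p}}$, each nonzero root having a well-defined finite multiplicative order. Let $M$ be the finite set of such orders. Then $q \in S_{p,K}$ with $q \neq p$ forces some element of $M$ to be a power of $q$, and any positive integer is a prime power for at most one prime; this gives the bound $|S_{p,K}| \le |M| \le \deg \Delta_{K}$, proving the lemma. The only delicate point is to justify the characterization of $p \mid \mathrm{Res}(\Delta_{K}, \Phi_{q^{k}})$ in terms of common roots modulo $p$, despite a possible degree drop of $\Delta_{K}$ modulo $p$. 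This is harmless because $\Phi_{q^{k}}$ is monic, so the resultant equals $\pm\prod \Delta_{K}(\beta)$ over the roots $\beta$ of $\Phi_{q^{k}}$, and its vanishing modulo $p$ coincides with the existence of a common root in $\overline{\mathbb{F}_{p}}$.
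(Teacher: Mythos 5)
Your proof is correct, and it takes a genuinely different route from the paper's. The paper works with the single resultant $\mathrm{Res}(\Delta_K(t),\,t^{n}-1)$, deduces that $q\in S_{p,K}$ forces $\Delta_K$ and $t^{n-1}+\cdots+1$ to share a root in $\overline{\mathbb{F}_p}$, and then argues by contradiction: if $S_{p,K}$ were infinite, two distinct primes would have to share a root of $\Delta_K \bmod p$, which is ruled out by a separate lemma asserting that $\mathrm{Res}\bigl(\tfrac{t^m-1}{t-1},\tfrac{t^n-1}{t-1}\bigr)=\pm1$ for coprime $m,n\geq 2$ --- a lemma the paper proves by an explicit inductive determinant computation mimicking the Euclidean algorithm. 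You instead factor $t^{q^r}-1$ into cyclotomic polynomials, reduce the question to whether $p\mid\mathrm{Res}(\Delta_K,\Phi_{q^k})$, and use the standard description of the roots of $\Phi_{q^k}$ over $\overline{\mathbb{F}_p}$: for $q\neq p$ they are exactly the elements of multiplicative order $q^k$, and for $q=p$ the Frobenius collapse $\Phi_{p^k}\equiv(t-1)^{\phi(p^k)}$ combined with $\Delta_K(1)=\pm1$ rules that case out. Since a nonzero element of $\overline{\mathbb{F}_p}$ has a unique multiplicative order, and a prime power greater than $1$ determines its prime, each root of $\Delta_K\bmod p$ can witness at most one prime $q$. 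This replaces the paper's auxiliary resultant lemma with off-the-shelf facts about cyclotomic polynomials over finite fields, and it buys you something the paper's contradiction argument does not state: the explicit quantitative bound $|S_{p,K}|\leq\deg\Delta_K$. Your handling of the one delicate point --- that reducing the resultant mod $p$ is legitimate because $\Phi_{q^k}$ is monic, so $\mathrm{Res}(\Phi_{q^k},\Delta_K)=\prod_\beta\Delta_K(\beta)$ survives base change despite any degree drop of $\Delta_K\bmod p$ --- is exactly right and worth keeping explicit.
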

\begin{proof}
Given a prime power $n$, consider the resultant of $\Delta_{K}(t)$ and $t^{n}-1$ over the field of complex numbers $\mathbb{C}$. Then we have
$$\mathrm{Res}(\Delta_{K}(t), t^{n}-1)=a_{\Delta}^{n}\prod_{j=0}^{n}\Delta_{K}(\mathrm{exp}({2\pi j}/{n})),$$ where $a_{\Delta}$ is the leading coefficient of $\Delta_{K}(t)$. If $p$ divides the order of $H_{1}(\Sigma^{n}(K); \mathbb{Z})$, then $p$ divides $\mathrm{Res}(\Delta_{K}(t), t^{n}-1)$. Therefore $\mathrm{Res}(\Delta_{K}(t), t^{n}-1)=0$ in the field $\mathbb{F}_{p}$, which means $\Delta_{K}(t)$ and $t^{n}-1=(t-1)(t^{n-1}+t^{n-2}+\cdots+1)$ having a common root in the algebraic closure of $\mathbb{F}_{p}$. Since $1$ is never a root of $\Delta_{K}(t)$, we see that $\Delta_{K}(t)$ and $t^{n-1}+t^{n-2}+\cdots+1$ have a common root in the algebraic closure of $\mathbb{F}_{p}$.

Since $p\geq 2$, we have $1\notin S_{p,K}$. 
We assume that the set $S_{p,K}$ is an infinite set. Since $\Delta_{K}(t)$ has only finitely many roots in the algebraic closure of $\mathbb{F}_{p}$, there must be two elements $q_{1}$ and $q_{2}$ in $S_{p,K}$ such that $t^{q_{1}^{r_{1}}-1}+t^{q_{1}^{r_{1}}-2}+\cdots+1$ and $t^{q_{2}^{r_{2}}-1}+t^{q_{2}^{r_{2}}-2}+\cdots+1$ have a common root in the algebraic closure of $\mathbb{F}_{p}$ for some positive integers $r_{1}$ and $r_{2}$. This contradicts Lemma~\ref{lemma3}. Therefore opposed to our assumption, the set $S_{p,K}$ is a finite set.

\end{proof}

\begin{lemma}
\label{lemma3}
If $m$ and $n$ are relatively prime integers greater than or equal to two, $t^{m-1}+t^{m-2}+\cdots+1$ and $t^{n-1}+t^{n-2}+\cdots+1$ can never have a common root in the algebraic closure of $\mathbb{F}_{p}$.
\end{lemma}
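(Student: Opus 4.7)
The plan is to exploit the identity $f_m(t):=t^{m-1}+\cdots+1=(t^m-1)/(t-1)$, which characterizes the roots of $f_m$ in $\overline{\mathbb{F}_p}$ very explicitly. A root $\alpha$ of $f_m$ satisfies $(\alpha-1)f_m(\alpha)=\alpha^m-1=0$, so either $\alpha\neq 1$ with $\alpha^m=1$, or $\alpha=1$ (which happens precisely when $p\mid m$, since $f_m(1)=m$). The whole argument then reduces to playing off $\alpha^m=1$ against $\alpha^n=1$ using $\gcd(m,n)=1$.

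First I would dispose of the case $p\mid m$ and $p\mid n$ at once: it would force $p\mid\gcd(m,n)=1$, which is impossible. So from now on at most one of $m,n$ is divisible by $p$. Next, if $p\nmid m$ and $p\nmid n$, then any common root $\alpha$ is automatically different from $1$ and satisfies $\alpha^m=\alpha^n=1$, hence $\alpha^{\gcd(m,n)}=\alpha=1$, contradiction.

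The only remaining case is, say, $p\mid m$ but $p\nmid n$. Write $m=p^{k}m'$ with $\gcd(m',p)=1$. In characteristic $p$ one has $t^{m}-1=(t^{m'}-1)^{p^{k}}$ by Frobenius, so $\alpha^{m}=1$ forces $\alpha^{m'}=1$. A common root $\alpha$ of $f_m$ and $f_n$ therefore satisfies $\alpha^{m'}=1$ and $\alpha^{n}=1$, so $\alpha^{\gcd(m',n)}=1$; since $\gcd(m',n)\mid\gcd(m,n)=1$, we conclude $\alpha=1$. But $f_n(1)=n\neq 0$ in $\mathbb{F}_p$ because $p\nmid n$, so $1$ is not a root of $f_n$, contradiction.

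There is no real obstacle here; the only mildly delicate point is recognising that in characteristic $p$ the polynomial $t^m-1$ can have repeated roots, so the shorthand ``roots of $t^m-1$'' must be replaced by roots of its separable part $t^{m'}-1$. Once that is handled by the Frobenius identity above, the coprimality of $m$ and $n$ does all the work, and the three cases cover every possibility for $p$.
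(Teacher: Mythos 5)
Your proof is correct, and it takes a genuinely different route from the paper. You argue directly with roots of unity: a common root $\alpha$ of $f_m=(t^m-1)/(t-1)$ and $f_n=(t^n-1)/(t-1)$ satisfies $\alpha^m=\alpha^n=1$, hence $\alpha^{\gcd(m,n)}=\alpha=1$, while $f_m(1)=m$ and $f_n(1)=n$ cannot both vanish mod $p$ because $\gcd(m,n)=1$; your three-case split (and the Frobenius reduction $t^m-1=(t^{m'}-1)^{p^k}$) is careful but in fact more than is needed, since the identity $\alpha^{\gcd(m,n)}=\alpha$ already collapses every case to $\alpha=1$. The paper instead computes the resultant $\mathrm{Res}(f_m,f_n)$ over $\mathbb{Z}$ as the determinant of the Sylvester-type matrix $A(m,n)$ and shows by a Euclidean-algorithm-style induction on $|m-n|$ that $\det A(m,n)=\pm\det A(m,n-m)=\cdots=\pm 1$. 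The paper's computation establishes the slightly stronger arithmetic fact that the integer resultant is a unit, which then yields the lemma uniformly for all $p$ at once; your argument is shorter, more conceptual, and avoids the determinant manipulations entirely, at the cost of being a field-by-field statement rather than an integral one. For the application in Lemma~\ref{lemma2} either version suffices.
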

\begin{proof}
It is enough to check that $\mathrm{Res}(t^{m-1}+t^{m-2}+\cdots+1, t^{n-1}+t^{n-2}+\cdots+1)=\pm 1$, which is non-zero in the algebraic closure of $\mathbb{F}_{p}$. In fact we have
\begin{equation*}
\label{local}
\tag{\dag}
\begin{split}
&\mathrm{Res}(t^{m-1}+t^{m-2}+\cdots+1, t^{n-1}+t^{n-2}+\cdots+1)\\ 
&=\det (A(m,n))=\pm 1,
\end{split}
\end{equation*} 
where 
\begin{align*}
A(m,n)
=\left({\begin{array}{ccccccc}
1 & 1 & 1 & \cdots & 0 & 0 & 0\\
0 & 1 & 1 & \cdots & 0 & 0 & 0\\
\vdots & \vdots & \vdots & & \vdots & \vdots & \vdots\\
0 & 0 & 0 & \cdots & 1 & 1 & 0\\
0 & 0 & 0 & \cdots & 1 & 1 & 1\\
\hline
1 & 1 & 1 & \cdots & 0 & 0 & 0\\
0 & 1 & 1 & \cdots & 0 & 0 & 0\\
\vdots & \vdots & \vdots & & \vdots & \vdots & \vdots\\
0 & 0 & 0 & \cdots & 1 & 1 & 0\\
0 & 0 & 0 & \cdots & 1 & 1 & 1\\
\end{array}} \right)
  \begin{array}{@{\kern-\nulldelimiterspace}l@{}}
    \left.\begin{array}{@{}c@{}}\Vspace\\\Vspace\\\Vspace\\\Vspace\end{array}\right\}\\
    \left.\begin{array}{@{}c@{}}\Vspace\\\Vspace\\\Vspace\\\Vspace\end{array}\right\}
  \end{array}
{\begin{array}{ccccccc}
\\
n-1 \text{ rows,}\\
\text{with } m \text{ 1's on each row;}\\
\\
\\
\\
\\
m-1 \text{ rows,}\\
\text{with } n \text{ 1's on each row.}\\
\\
\end{array}}. 
\end{align*}

The first equation in (\ref{local}) is a basic property of the resultant of two polynomials. We prove the second equation by induction on $|m-n|$. If $|m-n|=1$, we assume that $n=m+1$ and subtract the $j$'s row from the $n+j-1$'s row in $A(m,n)$, where $1\leq j \leq m-1$. Then we have 
\begin{align*}
\det(A(m,n))=
\left({\begin{array}{cccc|cccc}
1 & 1 & 1 & \cdots &  &  & \\
 & 1 & 1 & \cdots &  &  & *\\
 &  & \ddots & &  &  & \\
 0&  &  &  1 &  & \\
\hline
 &  &  & &1&  &  & \\
&  &  & && 1 &  & \\
&  &  0& &&  & \ddots & \\
&  &  & &&  &  & 1\\
\end{array}} \right)=1.
\end{align*}

If $|m-n|\geq 2$, assume $n>m$ and subtract the $j$'s row from the $n+j-1$'s row, where $1\leq j \leq m-1$. The resulting matrix is 
\begin{align*}
\left({\begin{array}{cccc|ccc}
1 & 1 & 1 & \cdots &  &  & \\
 & 1 & 1 & \cdots &  &  *& \\
 &  & \ddots & &  &  & \\
 0&  &  &  1 &  & \\
\hline
 &  &  & &  &  & \\
 &  & {\huge 0} & & & A(m,n-m) & \\
\end{array}} \right).
\end{align*}
Therefore $\det(A(m,n))=\det (A(m,n-m))$.\\
(1) If $|m-(n-m)|<n-m$, it follows from our induction that $\det(A(m,n))=\det (A(m,n-m))=\pm 1$.\\
(2) If $|m-(n-m)|\geq n-m$, then $2m-n\geq n-m \geq 2$, which implies $2n/3\leq m<n$. Let $k$ be the integer for which $kn/(k+1)\leq m<(k+1)n/(k+2)$ for $k\geq 2$. Then we have 
\begin{align*}
\det(A(m,n))&=\det (A(m,n-m))=\pm \det (A(n-m,2m-n))\\
&=\pm \det (A(n-m,3m-2n))=\cdots\\
&=\pm \det (A(n-m,km-(k-1)n)).
\end{align*}
The condition $kn/(k+1)\leq m<(k+1)n/(k+2)$ implies that $km-(k-1)n\geq n-m$ and $[km-(k-1)n-(n-m)]<(n-m)$. It follows from our induction that $\det(A(m,n))=\pm \det (A(n-m,km-(k-1)n))=\pm 1$. 
\end{proof}

\begin{proof}[Proof of Theorem \ref{theo1}]
\textbf{(i)} Choose a Seifert surface $F_{i}$ for the knot $K_{i}$ for $i=1,2$. Let $K=K_{1}\sharp K_{2}$ and $F=F_{1}\sharp F_{2}$. Then $F$ is a Seifert surface of $K$. Suppose $K$ is a slice knot. Then $F\cup \Delta$ bounds a 3-manifold in $B^{4}$, denoted by $R$, where $\Delta$ is a slice disk of $K$ in the 4-ball $B^{4}$. Consider the map $\iota: H_{1}(F; \mathbb{Z})\rightarrow H_{1}(R; \mathbb{Z})/\mathrm{Tor}$ induced by inclusion where $\mathrm{Tor}$ is the torsion part of $H_{1}(R; \mathbb{Z})$, and let $Z=\mathrm{ker}(\iota)$. Then $Z$ is a metabolizer of the Seifert form $\theta$ associated with $F$. See Theorems~3.1.1 and 3.1.2 in \cite{charles} for detailed discussion.

Let $Y$ denote $S^{3}$ sliced along $F$, and $X$ denote $D^{4}$ sliced along $R$. Considering the construction of $\Sigma^{n}(K)$ and $W^{n}(\Delta)$, we have the following commutative diagram.
\begin{center}
\begin{tikzpicture}
  \node (A) {};
  \node (B) [node distance=2.5cm, right of=A] {$\bigoplus_{1\leqslant i \leqslant n}H_{1}(F; \mathbb{Z})$};
  \node (C) [node distance=4cm, right of=B] {$\bigoplus_{1\leqslant i \leqslant n}H_{1}(Y; \mathbb{Z})$};
  \node (D) [node distance=3.7cm, right of=C] {$H_{1}(\Sigma^{n}(K); \mathbb{Z})$};
  \node (E) [node distance=2cm, right of=D] {$0$};
  \node (F) [node distance=2.5cm, below of=A] {};
  \node (G) [node distance=2.5cm, below of=B] {$\bigoplus_{1\leqslant i \leqslant n}H_{1}(R; \mathbb{Z})$};
  \node (H) [node distance=2.5cm, below of=C] {$\bigoplus_{1\leqslant i \leqslant n}H_{1}(X; \mathbb{Z})$};
  \node (I) [node distance=2.5cm, below of=D] {$H_{1}(W^{n}(\Delta); \mathbb{Z})$};
  \node (J) [node distance=2.5cm, below of=E] {$0$};

  \draw[->] (A) to node {} (B);
  \draw[->] (B) to node [below]{$j$} (C);
  \draw[->] (C) to node [below]{} (D); 
  \draw[->] (D) to node {} (E); 
  \draw[->] (F) to node {} (G);
  \draw[->] (G) to node {} (H);
  \draw[->] (H) to node [above] {} (I); 
  \draw[->] (I) to node {} (J); 
  \draw[->] (B) to node {} (G); 
  \draw[->] (C) to node {} (H); 
   \draw[->] (D) to node {} (I); 
\end{tikzpicture}
\end{center}
The map $j$ and the maps in the vertical direction are induced by the inclusion maps. We have the following isomorphisms.
$$H_{1}(Y; \mathbb{Z})\cong H_{1}(S^{3}-F; \mathbb{Z})\cong H^{1}(F; \mathbb{Z})\cong \mathrm{Hom}(H_{1}(F; \mathbb{Z}), \mathbb{Z})\cong H_{1}(F; \mathbb{Z}).$$ The second isomorphism follows from Alexander duality. In the same vein, we can establish the isomorphism $H_{1}(X; \mathbb{Z})\cong H_{1}(R; \mathbb{Z})$. So we can replace the previous commutative diagram with the following diagram, which we denote by $(*)$.
\begin{center}
\begin{tikzpicture}
  \node (A) {};
  \node (B) [node distance=2.5cm, right of=A] {$\bigoplus_{1\leqslant i \leqslant n}H_{1}(F; \mathbb{Z})$};
  \node (C) [node distance=4cm, right of=B] {$\bigoplus_{1\leqslant i \leqslant n}H_{1}(F; \mathbb{Z})$};
  \node (D) [node distance=3.7cm, right of=C] {$H_{1}(\Sigma^{n}(K); \mathbb{Z})$};
  \node (E) [node distance=2cm, right of=D] {$0$};
    \node (M) [node distance=2cm, right of=E] {};
  \node (F) [node distance=2.5cm, below of=A] {};
  \node (G) [node distance=2.5cm, below of=B] {$\bigoplus_{1\leqslant i \leqslant n}H_{1}(R; \mathbb{Z})$};
  \node (H) [node distance=2.5cm, below of=C] {$\bigoplus_{1\leqslant i \leqslant n}H_{1}(R; \mathbb{Z})$};
  \node (I) [node distance=2.5cm, below of=D] {$H_{1}(W^{n}(\Delta); \mathbb{Z})$};
  \node (J) [node distance=2.5cm, below of=E] {$0$};
  \node (N) [node distance=2.5cm, below of=M] {};
  
  \draw[->] (A) to node {} (B);
  \draw[->] (B) to node [below] {$f$} (C);
  \draw[->] (C) to node [below] {$g$} (D); 
  \draw[->] (D) to node {} (E); 
  \draw[->] (F) to node {} (G);
  \draw[->] (G) to node {} (H);
  \draw[->] (H) to node [above] {$h$} (I); 
  \draw[->] (I) to node {} (J); 
  \draw[->] (B) to node {} (G); 
  \draw[->] (C) to node [right] {$\bigoplus_{1\leqslant i \leqslant n}\bar{\iota}$} (H); 
  \draw[->] (D) to node [right] {$\zeta$} (I); 
\end{tikzpicture}
\end{center}
the maps $\bar{\iota}$ and $\zeta$ are induced by the inclusion maps.

We fix a basis $\lbrace a_{1}, a_{2}, \cdots, a_{2g}\rbrace$ for $H_{1}(F; \mathbb{Z})$ and let $A$ be the Seifert matrix of $\theta$ associated with this basis. Now we see there is a basis of $\bigoplus_{1\leqslant i \leqslant n}H_{1}(F; \mathbb{Z})$ which is naturally induced by $\lbrace a_{1}, a_{2}, \cdots, a_{2g}\rbrace$. Under this basis, the map $f$ is represented by the matrix
\begin{eqnarray*}
f=\left({\begin{array}{cccccc}
G & I-G & 0 & 0 & \cdots & 0\\
0 & G & I-G & 0 & \cdots & 0\\
0 & 0 & G & I-G & \cdots & 0\\
& \vdots & &\vdots & &\\
I-G & 0 & 0 &0 & \cdots & G \\
\end{array}} \right),
\end{eqnarray*}
where $G=(A^{t}-A)^{-1}A^{t}$ (See discussion before \cite[Lemma 1]{MR1201199} or \cite[Theorem~6.2.2]{charles}). Here we abuse $f$ to denote both the map and the matrix. It is known that $f$ is a presentation matrix of $H_{1}(\Sigma^{q}(K); \mathbb{Z})$.
Define $f_{1}$ and $f_{2}$ for $K_{1}$ and $K_{2}$ respectively. Then $f=f_{1}\oplus f_{2}$.

Remember $Z$ is the kernel of the map $\iota: H_{1}(F; \mathbb{Z})\rightarrow H_{1}(R; \mathbb{Z})/\mathrm{Tor}$. Let $M=g(\bigoplus_{1\leqslant i \leqslant n} Z)$. 
By Lemma~\ref{lemma4}, the order of $M$ is the square root of the order of $H_{1}(\Sigma^{n}(K); \mathbb{Z})$. 
By Lemma~\ref{kim}, the metaboizer $Z$ has a splitting $Z=Z_{1}\oplus Z_{2}$ where $Z_{i}$ is a metabolizer of the Seifert form $\theta_{i}$ associated with $F_{i}$, for $i=1,2$. Then $M$ has a splitting $M=M_{1}\oplus M_{2}$ where $M_{i}=g(\bigoplus_{1\leqslant i \leqslant n} Z_{i})$ for $i=1,2$. By Lemma~\ref{lemma4} the order of $M_{i}$ is the square root of the order of $H_{1}(\Sigma^{n}(K_{i}); \mathbb{Z})$ for $i=1,2$.

Since $H_{1}(R; \mathbb{Z})$ is finitely generated, there are only finitely many prime numbers dividing the order of $\mathrm{Tor}$, say they are elements in $\lbrace p_{1}, p_{2}, \cdots, p_{s}\rbrace$. We let $S=\bigcup_{j=1}^{s}S_{p_{j},K}$ where $S_{p_{j}, K}$ is the set described in Lemma~\ref{lemma2}, and then $S$ is again a finite set. 

Remember that $n=q^{r}$ for some prime number $q$ and positive integer $r$. Suppose the prime number $q$ is not in the set $S$. In this case, we claim that 
$$h(\bigoplus_{1\leqslant i \leqslant n} \bar{\iota} (\bigoplus_{1\leqslant i \leqslant n} Z))=h(\bigoplus_{1\leqslant i \leqslant n} (\bar{\iota} (Z)))=0.$$
Since $Z$ is the kernel of the map $\iota: H_{1}(F; \mathbb{Z})\rightarrow H_{1}(R; \mathbb{Z})/\mathrm{Tor}$, so $\bar{\iota} (Z)$ belongs to the torsion part $\mathrm{Tor}$ of $H_{1}(R; \mathbb{Z})$. Given $x\in \bigoplus_{1\leqslant i \leqslant n} Z$, let $y=\bigoplus_{1\leqslant i \leqslant n} \bar{\iota}(x)$. Then the order of $y$ divides the order of $\mathrm{Tor}$. On the other hand, the order of $g(x)$ divides the order of $H_{1}(\Sigma^{n}(K); \mathbb{Z})$. By the commutativity of the diagram (*) above, $\zeta (g(x))=h(y)$. If $\zeta (g(x))=h(y)\neq 0$, the order of $h(y)$ divides both the orders of $\mathrm{Tor}$ and $H_{1}(\Sigma^{n}(K); \mathbb{Z})$. So there exists an element $p\in \lbrace p_{1}, p_{2}, \cdots, p_{s}\rbrace$ dividing the order of $H_{1}(\Sigma^{n}(K); \mathbb{Z})$. This conflicts with the choice of $q$. Therefore $\zeta (g(x))=h(y)=0$, and our claim is proved.

By the commutativity of the diagram ($*$), we have $M\subset \mathrm{Ker}(\zeta)$. Note that since $K$ is slice, the order of $\mathrm{Ker}(\zeta)$ is the square root of the order of $H_{1}(\Sigma^{n}(K); \mathbb{Z})$ by Lemma~3 in \cite{MR900252}. Therefore $M$ and $\mathrm{Ker}(\zeta)$ has the same order as finite groups, so $M=\mathrm{Ker}(\zeta)$. The $d$-invariants defined on $M=\mathrm{Ker}(\zeta)$ are zero by Theorem~\ref{os}.

We now show that the $\bar{d}$-invariants of $\Sigma^{n}(K_{i})$ defined on $M_{i}$ are zero for both $i=1,2$. For any element $m_{1}\in M_{1}$, the element $(m_{1}, 0)$ is included in $M$, so by the additivity of $d$-invariant we have
\begin{eqnarray*}
d(\Sigma^{n}(K), s_{0}+(m_{1}, 0))
=d(\Sigma^{n}(K_{1}), s_{0}+m_{1})+d(\Sigma^{n}(K_{2}), s_{0})=0.
\end{eqnarray*}
Here we abuse $s_{0}$ to denote the unique spin$^{c}$-structures discussed in Section~2.1 over $\Sigma^{n}(K_{1})$, $\Sigma^{n}(K_{2})$ or $\Sigma^{n}(K)$. The value $d(\Sigma^{n}(K_{1}), s_{0}+m_{1})=-d(\Sigma^{n}(K_{2}), s_{0})$ does not depend on the choice of $m_{1}$, so we have
$$\bar{d}(\Sigma^{n}(K_{1}), s_{0}+m_{1})=d(\Sigma^{n}(K_{1}), s_{0}+m_{1})-d(\Sigma^{n}(K_{1}), s_{0})=0$$ for any $m_{1}\in M_{1}$. The same fact can be proved for $K_{2}$.

\textbf{(ii)} If $K$ is a ribbon knot, we can choose $R$ to be a handlebody, in which case $H_{1}(R; \mathbb{Z})$ is torsion free. Then the set $S$ is an empty set. Therefore the conclusion in $(i)$ holds for any prime power $n$.

\end{proof}

In the rest of this section, we give a proof of the following lemma, which we cannot find a good reference for it. 

\begin{lemma}
\label{lemma4}
Suppose $Z$ is a metabolizer of the Seifert form $\theta$ for a Seifert surface $F$ of the knot $K$.
The order of $M=g(\bigoplus_{1\leqslant i \leqslant n} Z)$ is the square root of the order of $H_{1}(\Sigma^{n}(K); \mathbb{Z})$ for any prime power $n$. 
\end{lemma}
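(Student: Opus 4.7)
The plan is to compute $|M|$ directly from the presentation matrix $f$ by exploiting a block decomposition induced by a splitting $H_{1}(F;\mathbb{Z}) = Z \oplus W$, where $W$ is a complementary direct summand to $Z$. In a basis adapted to this splitting, the vanishing of $\theta$ on $Z$ forces the Seifert matrix to take the form $A = \begin{pmatrix} 0 & B \\ C & D \end{pmatrix}$ with $g \times g$ blocks, and the unimodularity of the intersection form $A^{t}-A$ (whose upper-left block vanishes) implies that $N := C^{t} - B$ is itself a unimodular $g \times g$ matrix. A short block calculation then shows that $G = (A^{t}-A)^{-1}A^{t}$ is block-upper-triangular with respect to $Z \oplus W$, with diagonal entries $G_{ZZ} = -N^{-t}B^{t}$ and $G_{WW} = N^{-1}C^{t}$; the matrix $I - G$ is also block-upper-triangular.

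Because both $G$ and $I-G$ are block-upper-triangular, the circulant matrix $f$ is block-upper-triangular with diagonal blocks $f_{ZZ}$ and $f_{WW}$ acting on $\bigoplus_{i=1}^{n} Z$ and $\bigoplus_{i=1}^{n} W$ respectively. Since $|\det f| = |H_{1}(\Sigma^{n}(K);\mathbb{Z})| \neq 0$ for the prime power $n$, both $f_{ZZ}$ and $f_{WW}$ are invertible over $\mathbb{Q}$; in particular $f_{WW}$ has trivial kernel on the free abelian group $\bigoplus_{i=1}^{n} W$. A brief chase then gives $\mathrm{Im}(f) \cap \bigoplus_{i=1}^{n} Z = f_{ZZ}\bigl(\bigoplus_{i=1}^{n} Z\bigr)$: any element of $\mathrm{Im}(f)$ of the form $f(z,w)$ that lies in $\bigoplus_{i=1}^{n} Z$ must satisfy $f_{WW}w = 0$, hence $w = 0$. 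Therefore $M$ is isomorphic to $\bigl(\bigoplus_{i=1}^{n} Z\bigr)/f_{ZZ}\bigl(\bigoplus_{i=1}^{n} Z\bigr)$, a finite group of order $|\det f_{ZZ}|$, and the problem reduces to the identity $|\det f_{ZZ}| = |\det f_{WW}|$.

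For this identity I would use the symmetry $G_{WW} = N^{-1}(I-G)_{ZZ}^{t}N$ and $(I-G)_{WW} = N^{-1}G_{ZZ}^{t}N$, which is a direct verification from the formulas above. Conjugating $f_{WW}$ by the block-diagonal matrix $\mathrm{diag}(N, \ldots, N)$ shows that $f_{WW}$ is similar to the block circulant $\tilde{f}$ whose first block row is $\bigl((I-G)_{ZZ}^{t}, G_{ZZ}^{t}, 0, \ldots, 0\bigr)$, and a single cyclic block-shift converts $\tilde{f}$ into the transpose $(f_{ZZ})^{t}$. Since the shift has determinant $\pm 1$, we obtain $|\det f_{WW}| = |\det \tilde{f}| = |\det f_{ZZ}|$, and hence $|M|^{2} = |\det f_{ZZ}|\cdot|\det f_{WW}| = |\det f| = |H_{1}(\Sigma^{n}(K);\mathbb{Z})|$, as desired.

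The main technical obstacle is the block calculation itself: verifying that $G$ is block-upper-triangular and extracting the conjugation identity relating $f_{ZZ}$ and $f_{WW}$. Both steps reduce to systematic matrix manipulations using the unimodularity of $N$, but care with signs and transposes is essential.
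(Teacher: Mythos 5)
Your proof is correct, and it takes a genuinely different route from the paper. The paper passes from the circulant presentation matrix $f$ to Seifert's $2g\times 2g$ presentation matrix $\hat{f}=G^{n}-(G-I)^{n}$ via Casson--Gordon's Lemma~1 (unimodular $R$, $C$ whose blocks are polynomials in $G$), verifies separately that this reduction is compatible with $\bigoplus Z$ (the relations $(\ddag)$), and then computes with $\hat{f}$ in a basis adapted to $Z$, where the needed equality $\abs{\det(C_{n}')}^{2}=\abs{\det(\hat{f})}$ is inherited from the same source. You instead stay with the circulant $f$ throughout: the block computation $(A^{t}-A)^{-1}=\left(\begin{smallmatrix} N^{-t}SN^{-1} & -N^{-t}\\ N^{-1} & 0\end{smallmatrix}\right)$ with $S=D^{t}-D$ does give $G_{ZZ}=-N^{-t}B^{t}$, $G_{WZ}=0$, $G_{WW}=N^{-1}C^{t}$, so $f$ is block-triangular, the identification $\mathrm{Im}(f)\cap\bigoplus Z=f_{ZZ}(\bigoplus Z)$ follows from injectivity of $f_{WW}$, and your conjugation-plus-shift identity $P^{t}\,(Nf_{WW}N^{-1})=(f_{ZZ})^{t}$ checks out (I verified $N^{-1}C^{t}=I+N^{-1}B=N^{-1}(I-G)_{ZZ}^{t}N$ and $I-N^{-1}C^{t}=-N^{-1}B=N^{-1}G_{ZZ}^{t}N$). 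What your approach buys is self-containedness: it removes the dependence on the external lemma and the compatibility check $(\ddag)$, and it makes explicit, at the level of the circulant, the duality between the $Z$- and $W$-blocks that the paper uses only implicitly when asserting $\abs{\det\hat{f}}=\abs{\det(C_{n}')}^{2}$. What it costs is the hands-on matrix bookkeeping, and you should state explicitly the two small facts you use silently: that $M=g(\bigoplus Z)\cong(\bigoplus Z)/(\mathrm{Im}(f)\cap\bigoplus Z)$ because $\ker g=\mathrm{Im} f$, and that reordering the basis of $\bigoplus_{1\leqslant i\leqslant n}H_{1}(F;\mathbb{Z})$ so the $Z$-coordinates come first is a unimodular permutation preserving $\bigoplus Z$.
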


Seifert \cite{seifert} proved that $G^{n}-(G-I)^{n}$ is a presentation matrix for $H_{1}(\Sigma^{n}(K); \mathbb{Z})$ with the set of generators $\lbrace a_{1}, a_{2}, \cdots, a_{2g}\rbrace$. Namely we have an exact sequence 
$$0\to H_{1}(F; \mathbb{Z})\xrightarrow{\hat{f}:= G^{n}-(G-I)^{n}} H_{1}(F; \mathbb{Z})\xrightarrow{\hat{g}} H_{1}(\Sigma^{n}(K); \mathbb{Z})\to 0.$$
The map $\hat{g}$ induces an isomorphism $H_{1}(\Sigma^{n}(K); \mathbb{Z})\cong H_{1}(F; \mathbb{Z})/\mathrm{Im}(\hat{f})$ and $\hat{g}(Z)$ is isomorphic to $Z/(\mathrm{Im}(\hat{f})\cap Z)$. 

We prove that the order of $Z/(\mathrm{Im}(\hat{f})\cap Z)$ is a square root of that of $H_{1}(F; \mathbb{Z})/\mathrm{Im}(\hat{f})$.
The proof is similar to that of \cite[Lemma 2]{MR1201199}. 
As stated there, we may extend a basis $\lbrace x_{1}, x_{2}, \cdots, x_{g}\rbrace$ of $Z$ to a basis $\lbrace x_{1}, x_{2}, \cdots, x_{g}, y_{1}, y_{2}, \cdots, y_{g}\rbrace$ of $H_{1}(F; \mathbb{Z})$. Under this basis, the matrices $A$, $G$ and $\hat{f}$ have the forms 
$$
A=\left({\begin{array}{cc}
0 & C+I\\
C' & E  \\
\end{array}} \right),
G=\left({\begin{array}{cc}
C'+I & E'\\
0 & -C  \\
\end{array}} \right)
 \text{ and }
\hat{f}=\left({\begin{array}{cc}
C_{n}' & *\\
0 & C_{n}  \\
\end{array}} \right),
$$
where $C_{n}=C^{n}-(C-I)^{n}$. By the invertibility of $C_{n}$, we have $\mathrm{Im}(\hat{f})\cap Z\cong C_{n}'(Z)$. Therefore the order of $Z/(\mathrm{Im}(\hat{f})\cap Z)$ is $|\det(C_{n}')|$, while the order of $H_{1}(F; \mathbb{Z})/\mathrm{Im}(\hat{f})$ is $|\det(\hat{f})|=|\det(C_{n}')|^{2}$. 


\begin{proof}[Proof of Lemma~\ref{lemma4}]
We show that $Z/(\mathrm{Im}(\hat{f})\cap Z)$ is isomorphic to $(\bigoplus_{1\leqslant i \leqslant n} Z)/(\mathrm{Im}(f)\cap \bigoplus_{1\leqslant i \leqslant n} Z)$, which is isomorphic to $g(\bigoplus_{1\leqslant i \leqslant n} Z)$. Following Lemma~1 in \cite{MR1201199}, there are integral determinant $\pm 1$ $2gn\times 2gn$ matrices $R$ and $C$ which can be written as block matrices whose blocks are polynomials in $G$, such that $f^{+}=RfC$ has the form
\begin{eqnarray*}
f^{+}=\left({\begin{array}{cccccc}
I & * & \cdots &  *  &* \\
0 & I &\cdots  & *  &* \\
 \vdots&  & \ddots & \vdots  &\vdots \\
0& 0& \cdots&I & *\\
 0&  0&  \cdots&0& \hat{f} \\
\end{array}} \right),
\end{eqnarray*}
where the stars mean some unspecified polynomials in $G$.
It is very easy to check that $(\bigoplus_{1\leqslant i \leqslant n} Z)/(\mathrm{Im}(f^{+})\cap \bigoplus_{1\leqslant i \leqslant n} Z)\cong Z/(\mathrm{Im}(\hat{f})\cap Z)$ by the forms of $f^{+}$ and $\hat{f}$.

Next we show that $$(\bigoplus_{1\leqslant i \leqslant n} Z)/(\mathrm{Im}(f^{+})\cap \bigoplus_{1\leqslant i \leqslant n} Z)\cong(\bigoplus_{1\leqslant i \leqslant n} Z)/(\mathrm{Im}(f)\cap \bigoplus_{1\leqslant i \leqslant n} Z)$$ by using the properties of $R$ and $C$. 
Remember that $R$ and $C$ are automorphisms of $\bigoplus_{1\leqslant i \leqslant n}H_{1}(F; \mathbb{Z})$, so $\mathrm{Im}(f^{+})=\mathrm{Im}(RfC)=\mathrm{Im}(Rf)$. Now we only need to show that $R$ induces an isomorphism between $(\bigoplus_{1\leqslant i \leqslant n} Z)/(\mathrm{Im}(f)\cap \bigoplus_{1\leqslant i \leqslant n} Z)$ and $(\bigoplus_{1\leqslant i \leqslant n} Z)/(\mathrm{Im}(Rf)\cap \bigoplus_{1\leqslant i \leqslant n} Z)$. We show that 
\begin{equation*}
\label{relation}
\tag{\ddag}
\begin{split}
&R(\bigoplus_{1\leqslant i \leqslant n} Z)=\bigoplus_{1\leqslant i \leqslant n} Z, \text{ and } \\
&R(\mathrm{Im}(f)\cap \bigoplus_{1\leqslant i \leqslant n} Z)\cong \mathrm{Im}(Rf)\cap \bigoplus_{1\leqslant i \leqslant n} Z.
\end{split}
\end{equation*}


Choose an order for the elements in the basis of $\bigoplus_{1\leqslant i \leqslant n}H_{1}(F; \mathbb{Z})$ so that the elements in $\bigoplus_{1\leqslant i \leqslant n} Z$ take the first $ng$ positions. Remember that the blocks of $R$ are polynomials in $G$. The form of $G$ tells us that under the reordered basis, the matrix $R$ and its inverse are $2ng\times 2ng$ matrices with the form
$$
\left({\begin{array}{cc}
* & *\\
0 & *  \\
\end{array}} \right),
$$ where stars are $ng\times ng$ matrices. Since $R$ and its inverse are automorphisms, it is now easy to check that relations ($\ref{relation}$) hold.

\end{proof}

\subsection{$\tau$-invariant, $\mathcal{T}_{p}^{n}(K)$ and $\mathcal{D}_{p}^{n}(K)$}

Let $\widetilde{K}$ be the pre-image of $K$ in $\Sigma^{n}(K)$. Considering $\widetilde{K}$ as a knot in $\Sigma^{n}(K)$, Grigsby, Ruberman and Strle in \cite{MR2443966} defined the $\tau$-invariant $\tau (\widetilde{K}, s)$ for $\widetilde{K}$ and $s\in \mathrm{Spin}^{c}(\Sigma^{n}(K))$. This invariant satisfies the following property.
\begin{theo}[Grigsby, Ruberman and Strle]
\label{grs}
If $K$ is slice, then $\tau (\widetilde{K}, s_{0}+m)=0$ for any $m\in \mathrm{Ker}(\zeta)$.
\end{theo}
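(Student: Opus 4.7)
The plan is to mirror the classical Ozsv\'ath--Szab\'o argument bounding $\tau$ by the slice genus, but adapted to the setting of a rational homology 4-ball filling. The crucial inputs are the branched cover construction already used in the proof of Theorem~\ref{os} and a 4-dimensional cobordism inequality for $\tau$ in the knot Floer homology of knots inside rational homology 3-spheres.

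First I would use the slice disk $\Delta\subset B^{4}$ to produce a 4-dimensional filling of the pair $(\Sigma^{n}(K),\widetilde{K})$. The $n$-fold cyclic branched cover $W^{n}(\Delta)\to B^{4}$ is a rational homology 4-ball with boundary $\Sigma^{n}(K)$, and the preimage of $\Delta$ is a smoothly embedded disk $\widetilde{\Delta}\subset W^{n}(\Delta)$ whose boundary is the lifted knot $\widetilde{K}$. Thus $\widetilde{K}$ bounds a genus zero properly embedded surface inside a rational homology 4-ball.

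Next I would verify, exactly as in the discussion preceding Theorem~\ref{os}, that the spin$^{c}$-structures over $\Sigma^{n}(K)$ which extend across $W^{n}(\Delta)$ are precisely those of the form $s_{0}+m$ with $m\in\mathrm{Ker}(\zeta)$: the canonical structure $s_{0}$ extends by construction, and the ambiguity in extension on $\Sigma^{n}(K)$ is controlled by $\mathrm{Ker}(\zeta)$ via the relevant cohomology exact sequence. Hence every hypothesis of the $\tau$ inequality is met.

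The final step is to invoke the rational homology ball version of the $\tau$-genus bound: if $s\in\mathrm{Spin}^{c}(\Sigma^{n}(K))$ extends across a rational homology 4-ball $W$ in which $\widetilde{K}$ bounds a properly embedded surface of genus $g$, then $|\tau(\widetilde{K},s)|\leq g$. Applied to $W=W^{n}(\Delta)$ and $\widetilde{\Delta}$, where $g=0$, this gives $\tau(\widetilde{K},s_{0}+m)=0$ for every $m\in\mathrm{Ker}(\zeta)$.

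The main obstacle is establishing this genus bound itself, which is the technical heart of the Grigsby--Ruberman--Strle argument. One has to define the Alexander-filtered Heegaard Floer complex of $(\Sigma^{n}(K),\widetilde{K})$ at each spin$^{c}$-structure, understand the cobordism map induced by the exterior of a tubular neighborhood of $\widetilde{\Delta}$ inside $W^{n}(\Delta)$, verify that this map is non-trivial on the appropriate summand when $s$ extends, and show that its interaction with the Alexander filtration forces an adjunction-type inequality $\tau\le g$ in the rational homology ball setting. Once the inequality is in hand, specializing to the disk $\widetilde{\Delta}$ closes the argument.
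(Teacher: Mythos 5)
This statement is not proved in the paper at all: it is quoted verbatim as a theorem of Grigsby, Ruberman and Strle and cited from their article, so there is no in-paper argument to compare yours against. Measured against the original source, your outline does reproduce the actual strategy: the branched cover $W^{n}(\Delta)\to B^{4}$ of the slice disk is a rational homology $4$-ball containing the branch locus $\widetilde{\Delta}$, which is a smoothly embedded disk with boundary $\widetilde{K}$; the spin$^{c}$-structures on $\Sigma^{n}(K)$ that extend over $W^{n}(\Delta)$ are exactly those of the form $s_{0}+m$ with $m\in\mathrm{Ker}(\zeta)$ (the same fact the paper records just before Theorem~\ref{os}); and the conclusion then follows from a genus bound $\abs{\tau(\widetilde{K},s)}\leq g$ for properly embedded surfaces in rational homology $4$-balls at extendable spin$^{c}$-structures, applied with $g=0$.

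The gap is that this genus bound is not a standard fact you can simply invoke; it is the entire content of the Grigsby--Ruberman--Strle theorem, and you have only described what would need to be done to prove it (filtered complexes, cobordism maps for the disk exterior, an adjunction-type inequality in the rational homology ball setting) rather than carrying any of it out. In particular, the nonvanishing of the relevant cobordism map on the $\widehat{HF}$ summand at an extendable spin$^{c}$-structure, and the way the Alexander filtration of $\widetilde{K}$ interacts with that map, are precisely the points where the argument could fail if done carelessly, and they are the points you have black-boxed. So your proposal is an accurate roadmap of the original proof rather than a proof; for the purposes of this paper, where the theorem is imported by citation, that roadmap is consistent with how the result is actually used, but it should not be mistaken for an independent verification.
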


Note that the proof of Theorem~\ref{theo1} only depends on the algebraic information carried on $\mathrm{Ker}(\zeta)$ and Theorem~\ref{os}, while does not depend on the definition of $d$-invariant. By replacing Theorem~\ref{os} with Theorem~\ref{grs}, we can prove exactly the same fact for $\tau$-invariant as we did for $d$-invariant in Theorem~\ref{theo1}.

Grigsby, Ruberman and Strle in \cite{MR2443966} also defined invariants $\mathcal{D}_{p}(K)$ and $\mathcal{T}_{p}(K)$ associated with the double branched cyclic cover of the knot $K$. We can extend their definition naturally to the case of any $n$-branched cyclic cover with $n$ a prime power.

Suppose $\phi :E\rightarrow \mathbb{Q}$ is a function on a finite abelian group $E$ and $H<E$ is a subgroup. Following \cite{MR2443966} we let $S_{H}(\phi)=\sum_{h\in H}(\phi (h))$. Given a prime number $p$, let $\mathcal{G}_{p}$ be the set of all order $p$ subgroups of $H_{1}(\Sigma^{n}(K); \mathbb{Z})$. Then we can define
\begin{equation*}
\mathcal{T}_{p}^{n}(K)=\begin{cases}
\mathrm{min} \left\{ \abs{ \sum_{H\in \mathcal{G}_{p}}n_{H}S_{H}(\tau (\widetilde{K}, \cdot))} \left|
\begin{array}{l}
n_{H}\in \mathbb{Z}_{\geqslant 0} \text{ \& at least}\\
\text{one is non-zero} 
\end{array} \right.\right\}\\
\hspace{50mm} \text{ ; if $p$ divides $\vert H_{1}(\Sigma^{n}(K); \mathbb{Z})\vert$}\\
\hspace{10mm} 0 \hspace{38mm}  \text{ ; otherwise}
\end{cases}
\end{equation*}
and 
\begin{equation*}
\mathcal{D}_{p}^{n}(K)=\begin{cases}
\mathrm{min} \left\{ \abs{ \sum_{H\in \mathcal{G}_{p}}n_{H}S_{H}(d(\Sigma^{n}(K), \cdot))} \left|
\begin{array}{l}
n_{H}\in \mathbb{Z}_{\geqslant 0} \text{ \& at least}\\
\text{one is non-zero} 
\end{array} \right.\right\}\\
\hspace{50mm} \text{ ; if $p$ divides $\vert H_{1}(\Sigma^{n}(K); \mathbb{Z})\vert$}\\
\hspace{10mm} 0 \hspace{38mm}  \text{ ; otherwise}
\end{cases}.
\end{equation*}
Here we regard $\tau (\widetilde{K}, \cdot)$ and $d(\Sigma^{n}(K), \cdot)$ as functions from $H_{1}(\Sigma^{n}(K); \mathbb{Z})$ to $\mathbb{Q}$.

Given a function $\phi :E\rightarrow \mathbb{Q}$, we define $\bar{\phi}: E\rightarrow \mathbb{Q}$ by sending $e\in E$ to $\phi (e)-\phi (0)$. Let $\overline{\mathcal{T}}_{p}^{n}(K)$ and $\overline{\mathcal{D}}_{p}^{n}(K)$ be the invariants defined by taking $\bar{\tau}$ and $\bar{d}$. We prove the following theorem:

\begin{theo}
\label{2}
Let $p$ be a positive prime number or 1.
Suppose the Alexander polynomials of $K_{1}$ and $K_{2}$ are relatively prime in $\mathbb{Q}[t, t^{-1}]$. Suppose further that at least one of $K_{1}$ and $K_{2}$ has non-singular Seifert form. 
\begin{enumerate}
\item If $n_{1}K_{1}\sharp n_{2}K_{2}$ is a slice knot for some non-zero $n_{1}$ and $n_{2}$, then for all but finitely many primes $q$, the following holds:  $\mathcal{\overline{T}}_{p}^{n}(K_{i})=\mathcal{\overline{D}}_{p}^{n}(K_{i})=0$ for $i=1,2$, where $n$ is a power of $q$.

\item If $n_{1}K_{1}\sharp n_{2}K_{2}$ is a ribbon knot for some non-zero $n_{1}$ and $n_{2}$, the conclusions above hold for any prime power $n$.
\end{enumerate}
\end{theo}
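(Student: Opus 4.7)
The plan is to reduce Theorem~\ref{2} to Theorem~\ref{theo1} (and its $\tau$-analog) by regrouping the connected summands, and then to extract the vanishing of $\overline{\mathcal{D}}_p^n$ and $\overline{\mathcal{T}}_p^n$ from the ``square-root'' subgroup that Theorem~\ref{theo1} produces. Set $K_i':=n_iK_i$ for $i=1,2$. Since $\Delta_{K_i'}(t)=\Delta_{K_i}(t)^{n_i}$ has the same roots in $\mathbb{C}$ as $\Delta_{K_i}(t)$, the polynomials $\Delta_{K_1'}$ and $\Delta_{K_2'}$ remain coprime in $\mathbb{Q}[t,t^{-1}]$; likewise the Seifert form $\theta_i^{\oplus n_i}$ of $K_i'$ is non-singular whenever $\theta_i$ is. Since $K_1'\sharp K_2'=n_1K_1\sharp n_2K_2$ is slice (resp.\ ribbon), Theorem~\ref{theo1} applies: for all but finitely many primes $q$ (resp.\ all primes), and any $n=q^r$, there are subgroups $M_i'<H_1(\Sigma^n(K_i');\mathbb{Z})$ with $|M_i'|^2=|H_1(\Sigma^n(K_i');\mathbb{Z})|$ on which $\bar d$ vanishes in the $s_0$-shifted identification, and the $\tau$-version of Theorem~\ref{theo1} gives the same conclusion for $\bar\tau(\widetilde K_i',\cdot)$.

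Next I unpack the connected sum. The identification $\Sigma^n(K_i')\cong\#_{j=1}^{n_i}\Sigma^n(K_i)$ gives $H_1(\Sigma^n(K_i');\mathbb{Z})\cong H_i^{n_i}$ where $H_i:=H_1(\Sigma^n(K_i);\mathbb{Z})$. Using the additivity of $d$ and $\tau$ under connected sum together with the compatibility $s_0(K\sharp K')=s_0(K)\sharp s_0(K')$, the vanishing of $\bar d$ on $M_i'$ translates to the assertion that for every tuple $(m_1,\ldots,m_{n_i})\in M_i'$,
$$\sum_{j=1}^{n_i}\bar d(\Sigma^n(K_i),s_0+m_j)\;=\;0,$$
and the analogous identity holds for $\bar\tau(\widetilde K_i,\cdot)$.

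Now fix a prime $p$ dividing $|H_i|$ and consider the $p$-primary component $M_{i,p}'\subset H_{i,p}^{n_i}$, where $H_{i,p}$ is the $p$-primary part of $H_i$. Since $|M_{i,p}'|=|H_{i,p}|^{n_i/2}>1$, there exists an element $y=(y_1,\ldots,y_{n_i})\in M_{i,p}'$ of order exactly $p$, and each coordinate $y_j$ is either $0$ or generates an order-$p$ subgroup $L_j:=\langle y_j\rangle<H_i$. Summing the displayed identity over $l\in L:=\langle y\rangle\subset M_i'$, the indices $j$ with $y_j=0$ contribute $p\cdot\bar d(\Sigma^n(K_i),s_0)=0$, while those with $y_j\neq 0$ contribute $S_{L_j}(\bar d(\Sigma^n(K_i),\cdot))$, giving
$$\sum_{j:\,y_j\neq 0}S_{L_j}\bigl(\bar d(\Sigma^n(K_i),\cdot)\bigr)\;=\;0,$$
which, after grouping equal $L_j$'s, is an admissible non-negative integer combination with at least one positive coefficient, forcing $\overline{\mathcal{D}}_p^n(K_i)=0$. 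The identical argument applied to $\bar\tau$ yields $\overline{\mathcal{T}}_p^n(K_i)=0$. When $p$ does not divide $|H_i|$ both invariants vanish by definition, and the case $p=1$ is trivial since $\bar d(s_0)=\bar\tau(s_0)=0$.

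The only step I expect to be non-routine is the additivity of $\bar\tau$ under the connected sum decomposition $\Sigma^n(K\sharp K')\cong\Sigma^n(K)\sharp\Sigma^n(K')$ with $\widetilde{K\sharp K'}=\widetilde K\sharp\widetilde K'$; this parallels the well-known additivity for $d$ used in Theorem~\ref{theo1} but depends on the behavior of the knot Floer chain complex for null-homologous knots in rational homology $3$-spheres under connected sum. Once this additivity is granted, everything else is a combinatorial extraction from the $p$-primary part of the square-root subgroup.
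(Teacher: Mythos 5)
Your proof is correct and follows the same route the paper intends: the paper's own ``proof'' is a one-line instruction to combine Theorem~\ref{theo1} with the argument of Theorem~1.2 and Proposition~3.4 of Grigsby--Ruberman--Strle, and your write-up is a faithful expansion of exactly that combination (apply Theorem~\ref{theo1} to $n_1K_1\sharp n_2K_2$, then extract an order-$p$ element of the square-root subgroup and average over the cyclic group it generates). The only points you leave implicit --- the sign flip when some $n_i<0$, and the additivity of $\tau(\widetilde K,\cdot)$ under connected sum, which is supplied by \cite{MR2443966} --- are harmless.
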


\begin{proof}
The proof is a combination of the proof of Theorem~\ref{theo1}, the proof of Theorem~1.2 in \cite{MR2443966}, Proposition~3.4 in \cite{MR2443966},  Theorem~\ref{os} and Theorem~\ref{grs}.
\end{proof}

\section{Application}
It is known in \cite[Corollary 23]{MR0253348} that the twist knot $T_{k}$ is 
\begin{itemize}
\item of infinite order in the algebraic concordance group $C_{alg}$ if $k<0$;
\item algebraically slice if $k\geq 0$ and $4k+1$ is a square;
\item of finite order in $C_{alg}$ otherwise.
\end{itemize}

\begin{figure}[h]
	\centering
		\includegraphics[width=0.6\textwidth]{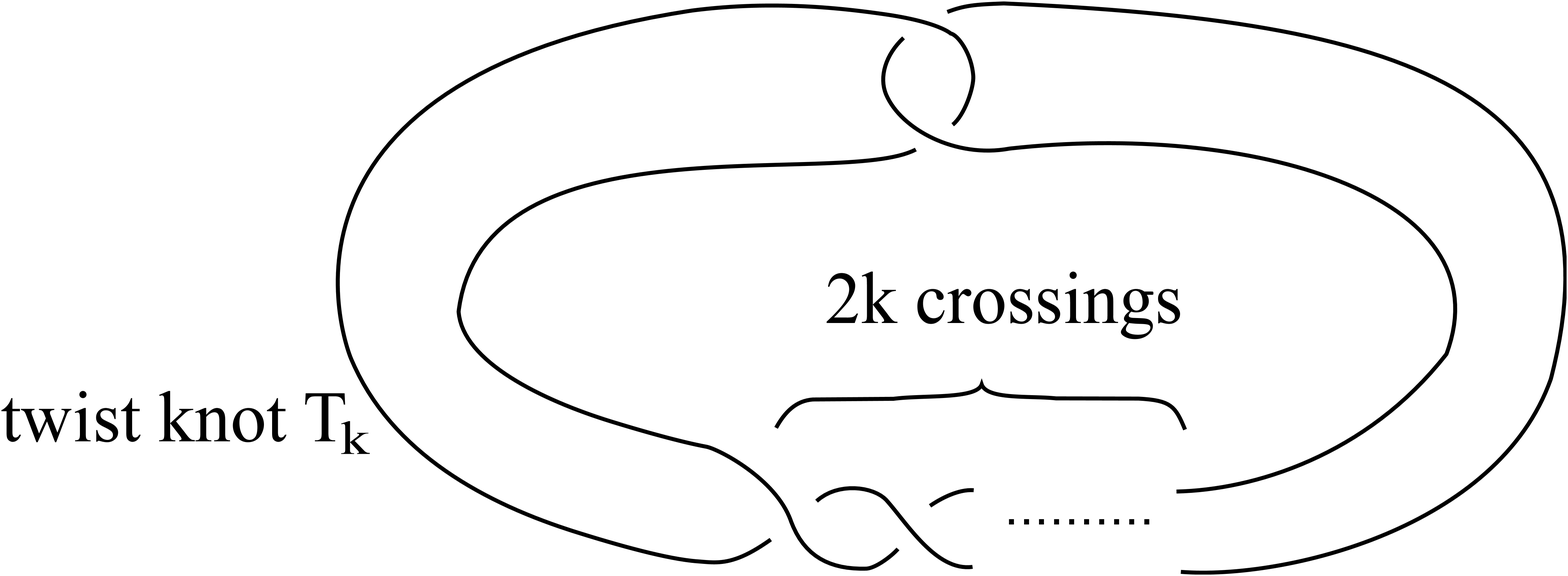}
		\caption{}
	\label{fig:fig4}
\end{figure}



\begin{proof}[Proof of Proposition~\ref{p}]
The Alexander polynomial of the $T_{k}$ is $$\Delta_{T_{k}}(t)=-kt^{2}+(2k+1)t-k.$$
It is easy to check that for any two non-trivial twist knots, their Alexander polynomials are relatively prime in $\mathbb{Q}[t^{-1}, t]$.

Note that each non-trival twist knot has non-singular Seifert form. Excluding the unknot, 1-twist knot and 2-twist knot, suppose that there are two sets of positive integers $\{ k_{i}\}_{i=1}^{l}$ and $\{ n_{i}\}_{i=1}^{l}$ for which $K=\sharp_{i=1}^{l}(n_{i}T_{k_{i}})$ is a ribbon knot. Then
\begin{itemize}
\item by Lemma~\ref{kim}, each $n_{i}T_{k_{i}}$ is algebraically slice;
\item by our Theorem~\ref{2}, each $T_{k_{i}}$ has vanishing $\mathcal{\bar{D}}_{p}^{q}$ and $\mathcal{\bar{T}}_{p}^{q}$ for any prime number $p$ and prime number $q$.
\end{itemize}

We consider the case when $q=2$, namely the double branched covers of the twist knots.

Recall that $T_{k}$ has infinite order in the algebraic concordance group $C_{alg}$ if $k<0$. So each $k_{i}$ for $1\leq i \leq l$ is a positive integer.

Let $L_{k}$ be the 3-manifold $\Sigma^{2}(T_{k})=L(4k+1,2)$. Assume that $k\geq 0$ and let $p$ be a prime number dividing $4k+1$. Then
\begin{eqnarray*}
\overline{\mathcal{D}}_{p}^{2}(T_{k}) =\left| \sum_{j=0}^{p-1} \bar{d}(L_{k}, s_{0}+j) \right|
= \left| \sum_{j=0}^{p-1} ({d}(L_{k}, s_{0}+j)-d(L_{k}, s_{0})) \right|.
\end{eqnarray*}

Ozsv{\'a}th and Szab{\'o} \cite{MR1957829} provided a formula of the $d$-invariants for lens spaces, by which we have
\begin{eqnarray*}
d(L_{k}, s_{0}+j)=\dfrac{1}{4}-\dfrac{j^{2}}{8k+2}+\begin{cases}
\dfrac{1}{4} & \text{ if $j$ is odd;}\\
\dfrac{-1}{4} & \text{ if $j$ is even,}
\end{cases}
\end{eqnarray*}
for $0\leq j \leq 2k$.

By calculation we have
$d(L_{k}, s_{0})=0$. So 
\begin{eqnarray*}
\overline{\mathcal{D}}_{p}^{2}(T_{k}) 
= \left| \sum_{j=0}^{p-1} {d}(L_{k}, s_{0}+j) \right|= {\mathcal{D}}_{p}^{2}(T_{k}).
\end{eqnarray*}

In \cite[Proposition~5.1]{MR2443966}, the authors discussed ${\mathcal{D}}_{p}^{2}(T_{k})$ for $k>0$ and showed that
$${\mathcal{D}}_{p}^{2}(T_{k})>0$$ except for the cases $k=0,1,2$.
Therefore those $T_{k_{i}}$ which make $\bar{\mathcal{D}}_{p}^{2}$ vanishes are restricted to $T_{0}, T_{1}$ and $T_{2}$. This completes the proof.
\end{proof}

\bibliographystyle{siam}
\bibliography{bao}

\end{document}